\theoremstyle{plain}
\newtheorem{theorem}{Theorem}[section]
\newtheorem{proposition}[theorem]{Proposition}
\theoremstyle{remark}
\theoremstyle{definition}
\title{A Note on the Carath\'{e}odory Number of the Joint Numerical Range}
  \author{Beatrice Maier}
 \author{Tim Netzer}
\affil{\small Department of Mathematics, University of Innsbruck, Austria}
\date{\today}                      
\begin{document}
\maketitle

\begin{abstract}
    We show that the Carath\'{e}odory number of the joint numerical range of $d$ many bounded self-adjoint operators is at most $d-1$, and even at most $d-2$ if the underlying Hilbert space has dimension at least $3$. This extension of the classical convexity results for numerical ranges shows that also joint numerical ranges are significantly less non-convex than general sets.
\end{abstract}

\section{Introduction and Preliminaries}

Let $T_1,\ldots, T_d\in\mathcal B(H)_{\rm sa}$ be self-adjoint bounded linear operators on the Hilbert space $H$. Their \emph{joint numerical range} is  defined as
$$\mathcal W(T_1,\ldots, T_d)\coloneqq \left\{ (\langle T_1h,h\rangle,\ldots, \langle T_dh,h\rangle)\mid h\in H, \Vert h\Vert=1\right\}\subseteq \mathbb R^d.$$
For $d\leqslant 2$ the joint numerical range  is  always convex,  a famous result by Toeplitz and Hausdorff \cite{hau,toe}. This is still true if $d=3$ and $\dim(H)\geqslant 3$ \cite{poon}, but fails for   $d\geqslant 4$ in general. 

An interesting measure for the non-convexity of a set $S\subseteq\mathbb R^d$ is its \emph{Carath\'{e}odory number} $\mathfrak c(S).$ It is defined as the smallest positive integer $n$, such that every element in the convex hull of $S$ is a convex combination of at most $n$ elements from $S$. A set is clearly convex if and only if its Carath\'eodory number is $1$. Carath\'eodory's Theorem \cite{car,st} states that ${\mathfrak c}(S)\leqslant d+1$ holds for any set $S\subseteq \mathbb R^d,$ and any set of $d+1$ affinely independent points shows that this general bound is optimal.
However, if $S\subseteq \mathbb R^d$ is connected, then ${\mathfrak c}(S)\leqslant d$ \cite{fen, bunt}.

As the image of the unit sphere in $H$ under a continuous map, $\mathcal W(T_1,\ldots, T_d)$ is clearly path-connected, and thus ${\mathfrak c}\left(\mathcal W(T_1,\ldots, T_d)\right)\leqslant d.$ In this note we observe that this can be improved  by one, and even by two if $\dim(H)\geqslant 3$ (\cref{thm_main}). This shows that joint numerical ranges are significantly less non-convex than general connected sets. Our proof is a combination of some of the standard methods to prove convexity of the classical numerical range, but suitably applied to joint numerical ranges.

\section{Main Result}

The proof of the following proposition contains  the initial idea from \cite{hau}, combined with \cite{lyu}, and in the multidimensional setup. It shows that joint numerical ranges are path-connected in a strong sense.
\begin{proposition}\label{prop:con} For any $T_1,\ldots, T_d\in{\mathcal B}(H)_{\rm sa}$ and for any affine hyperplane $P\subseteq \mathbb R^d$,  $$\mathcal W(T_1,\ldots, T_d)\cap P$$ is path-connected. If $\dim(H)\geqslant 3,$ this also holds when ${\rm codim}(P)=2.$
\end{proposition}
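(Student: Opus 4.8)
The plan is to prove the stronger statement that any two unit vectors whose images lie in $\mathcal{W}(T_1,\ldots,T_d)\cap P$ can be joined by a continuous path of unit vectors whose images stay in $P$; applying the continuous map $h\mapsto(\langle T_1h,h\rangle,\ldots,\langle T_dh,h\rangle)$ to such a path then yields the desired path inside $\mathcal{W}(T_1,\ldots,T_d)\cap P$. Writing $P=\{x\in\mathbb{R}^d:\langle a,x\rangle=c\}$ and setting $A\coloneqq\sum_i a_iT_i-cI\in\mathcal B(H)_{\rm sa}$, a unit vector $h$ has its image in $P$ precisely when $\langle Ah,h\rangle=0$. Thus the codimension-$1$ case reduces to joining unit vectors $h_1,h_2$ with $\langle Ah_j,h_j\rangle=0$ by a path of unit vectors along which $h\mapsto\langle Ah,h\rangle$ vanishes identically. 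In the codimension-$2$ case, writing $P=\{\langle a,x\rangle=c,\ \langle b,x\rangle=c'\}$ and $B\coloneqq\sum_i b_iT_i-c'I$, the membership condition becomes the pair $\langle Ah,h\rangle=\langle Bh,h\rangle=0$.

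For the hyperplane case I would restrict attention to $V=\mathrm{span}(h_1,h_2)$, which has dimension at most $2$; if $\dim V=1$ the two vectors already give the same point, so assume $\dim V=2$. Replacing $A$ by its compression to $V$ reduces the problem to two dimensions. Identifying the unit vectors of $V$ up to phase with the Bloch sphere $S^2$, the function $h\mapsto\langle Ah,h\rangle$ becomes an \emph{affine} function on $S^2$, so its zero set is the intersection of $S^2$ with an affine plane. Since $h_1,h_2$ lie in it, this set is nonempty, hence a point or a circle, and in either case path-connected; the connecting arc in $V$ maps to the required path.

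For the codimension-$2$ case the essential new input is $\dim(H)\geqslant 3$, which lets me enlarge $\mathrm{span}(h_1,h_2)$ to a three-dimensional subspace $V$ and compress all operators to $V\cong\mathbb{C}^3$. The key observation is that the two real conditions $\langle Ah,h\rangle=\langle Bh,h\rangle=0$ amount to the single complex condition $\langle Ch,h\rangle=0$ for the (non-self-adjoint) operator $C\coloneqq A+iB$; that is, the membership fibre is exactly a fibre of the classical numerical-range map $h\mapsto\langle Ch,h\rangle$ of a $3\times 3$ matrix. So it suffices to show that for $C\in M_3(\mathbb{C})$ and $z\in W(C)$ the fibre $\{h:\|h\|=1,\ \langle Ch,h\rangle=z\}$ is path-connected, and that $h_1,h_2$ lie in it.

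The hard part is precisely this fibre-connectedness on $\mathbb{CP}^2$, and the dimension hypothesis is indispensable: for the $2\times2$ nilpotent $C=\bigl(\begin{smallmatrix}0&1\\0&0\end{smallmatrix}\bigr)$ the fibre over $0\in W(C)$ consists of the two isolated points $[1:0]$ and $[0:1]$, so the statement fails in dimension $2$. To establish connectedness in dimension $3$ I would study the common zero set of the two Hermitian forms as a generically smooth compact real surface in $\mathbb{CP}^2$: one route is to reduce the Hermitian pencil $A,B$ to a canonical normal form and verify connectedness of the zero set case by case, and another is to argue topologically that the $A$-isotropic unit vectors form a connected hypersurface on which $h\mapsto\langle Bh,h\rangle$ has connected zero level set. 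Either route genuinely needs two-parameter deformations inside $V$, since within a single $2$-plane the two constraints generically cut out only isolated points and no sequence of two-dimensional moves suffices; this level-set connectedness is the true crux of the proposition.
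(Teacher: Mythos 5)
Your overall strategy coincides with the paper's: lift the problem to the unit sphere and reduce both assertions to the path-connectedness of the zero set of a single quadratic form, namely $\{h:\|h\|=1,\ \langle Ah,h\rangle=0\}$ for a self-adjoint $A$ in the hyperplane case and $\{h:\|h\|=1,\ \langle (A+iB)h,h\rangle=0\}$ in the codimension-two case. For the hyperplane case your argument is complete and in fact more self-contained than the paper's, which simply cites \cite{hau}: compressing to $V=\mathrm{span}(h_1,h_2)$ and observing that $h\mapsto\langle Ah,h\rangle$ is affine in the Bloch vector correctly identifies the level set with the intersection of $S^2$ with an affine plane, hence a point or a circle. (One cosmetic point: your path lives on $S^2=\mathbb{CP}^1$, so you should lift it through the Hopf fibration and append a phase rotation to land exactly at $h_2$; this is harmless because $\langle Ah,h\rangle$ is phase-invariant.)

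The codimension-two case, however, is not actually proved. You correctly isolate the crux --- path-connectedness of the fibre $\{h:\|h\|=1,\ \langle Ch,h\rangle=z\}$ of the numerical-range map of a $3\times 3$ matrix, together with the correct $2\times 2$ counterexample showing the dimension hypothesis is essential --- but you then only name two possible routes (canonical forms for the Hermitian pencil; a topological level-set argument) without carrying either out. This fibre-connectedness is precisely the nontrivial input of the proposition; the paper does not prove it either, but it closes the gap by invoking the known theorem (its reference \cite{lyu}) that for $\dim(H)\geqslant 3$ every fibre of $h\mapsto\langle Ch,h\rangle$ over the unit sphere is path-connected. To make your proof complete you must either cite that result explicitly or actually execute one of your sketched arguments; as written, the second half of the proposition rests on an unproven claim.
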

\begin{proof}For the affine hyperplane $P$ we can assume without loss of generality that $P=\{x_d=0\}.$ For $p_0,p_1\in \mathcal W(T_1,\ldots, T_d)\cap P$ there are unit vectors $h_0,h_1\in H$ with $$p_i=(\langle T_1h_i,h_i\rangle, \ldots, \underbrace{\langle T_dh_i,h_i\rangle}_{=0}).$$ Now as already shown and used in \cite{hau}, the set $$L\coloneqq \left\{h\in H\mid \Vert h\Vert=1, \langle T_dh,h\rangle=0\right\}$$ is path-connected. So there is a continuous path $h_t$ from $h_0$ to $h_1$ in $L,$ and $$p_t\coloneqq (\langle T_1h_t,h_t\rangle, \ldots, \underbrace{\langle T_dh_t,h_t\rangle}_{=0})$$ is thus a continuous path from $p_0$ to $p_1$ in  $\mathcal W(T_1,\ldots, T_d)\cap P.$

Now in the case ${\rm codim}(P)=2$ we can assume $P=\{x_{d-1}=0 \wedge x_d=0\}$.  We repeat the same argument, this times using that \begin{align*}L&= \left\{h\in H\mid \Vert h\Vert=1, \langle T_{d-1}h,h\rangle=\langle T_dh,h\rangle= 0\right\} \\ & =\{ h\in H\mid \Vert h\Vert =1, \langle(T_{d-1}+iT_d)h,h\rangle=0\}\end{align*} is path connected. This was proven in \cite{lyu} for the case that $\dim(H)\geqslant 3.$ 
\end{proof}

We can now prove our main result:
\begin{theorem}\label{thm_main}
For $d\geqslant 2$ and any $T_1,\ldots, T_d\in\mathcal B(H)_{\rm sa}$ we have ${\mathfrak c}\left(\mathcal W(T_1,\ldots, T_d)\right)\leqslant d-1.$ In case $d\geqslant 3$ and $\dim(H)\geqslant 3$ we even have  ${\mathfrak c}\left(\mathcal W(T_1,\ldots, T_d)\right)\leqslant d-2.$
\end{theorem}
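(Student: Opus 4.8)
The engine of the argument will be the Fenchel--Bunt theorem quoted in the introduction: a connected subset $S$ of $\mathbb{R}^m$ satisfies $\mathfrak{c}(S)\le m$. The plan is to feed \cref{prop:con} into this theorem one slice at a time. First I would fix $x\in\operatorname{conv}\mathcal{W}(T_1,\ldots,T_d)$ and choose a convex representation $x=\sum_{i=1}^k\lambda_i s_i$ with all $\lambda_i>0$, the $s_i\in\mathcal{W}(T_1,\ldots,T_d)$ pairwise distinct, and $k$ as small as possible. Minimality forces $s_1,\ldots,s_k$ to be affinely independent, so that $P\coloneqq\operatorname{aff}(s_1,\ldots,s_k)$ is an affine subspace of dimension exactly $k-1$, and both the $s_i$ and $x$ lie in $P$.

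Since $\mathcal{W}(T_1,\ldots,T_d)$ is path-connected, Fenchel--Bunt already gives $k\le d$. The first reduction is the case $k=d$: then $P$ is an affine hyperplane, so $\mathcal{W}(T_1,\ldots,T_d)\cap P$ is path-connected by the codimension-one part of \cref{prop:con}. Viewing this slice inside $P\cong\mathbb{R}^{d-1}$ and applying Fenchel--Bunt there shows that $x$, which lies in $\operatorname{conv}\bigl(\mathcal{W}(T_1,\ldots,T_d)\cap P\bigr)$, is in fact a convex combination of at most $d-1$ points of $\mathcal{W}(T_1,\ldots,T_d)\cap P\subseteq\mathcal{W}(T_1,\ldots,T_d)$. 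This contradicts the minimality of $k=d$, so $k\le d-1$, which is the first bound.

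For the second bound I would assume $d\ge 3$ and $\dim(H)\ge 3$ and run the same step once more. By what has been shown $k\le d-1$; suppose $k=d-1$. Now $P$ has dimension $d-2$, i.e.\ codimension two, and here the second half of \cref{prop:con} applies to make $\mathcal{W}(T_1,\ldots,T_d)\cap P$ path-connected. Fenchel--Bunt inside $P\cong\mathbb{R}^{d-2}$ then rewrites $x$ as a convex combination of at most $d-2$ points of the slice, again contradicting minimality; hence $k\le d-2$.

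The arithmetic to watch, and the one genuinely load-bearing point, is the codimension bookkeeping: a minimal representation by $k$ points always spans an affine subspace of codimension $d-k+1$, while \cref{prop:con} supplies connectedness of the corresponding slice only in codimension one (unconditionally) and codimension two (when $\dim(H)\ge 3$). This is exactly why the reduction halts at $k=d-1$, respectively $k=d-2$, and does not cascade further down to convexity, and it is the single place where the hypothesis $\dim(H)\ge 3$ must be invoked. Everything else is the standard affine-independence analysis of a shortest convex representation, together with two appeals to Fenchel--Bunt applied within the slice rather than in all of $\mathbb{R}^d$.
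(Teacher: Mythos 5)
Your proposal is correct and follows essentially the same route as the paper: both arguments feed \cref{prop:con} into the Fenchel--Bunt bound applied inside the affine span of a representing set of points, first in codimension one and then in codimension two. The only difference is presentational — you organize the step as a minimality-and-contradiction argument with affinely independent points, whereas the paper simply iterates the reduction directly — and this changes nothing of substance.
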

\begin{proof}
Let $p\in{\rm conv}\left(\mathcal W(T_1,\ldots, T_d)\right).$ From \cite{fen, bunt} we already know  ${\mathfrak c}\left(\mathcal W(T_1,\ldots, T_d)\right)\leqslant d,$ thus  $p$ is a convex combination of $d$ points from $\mathcal W(T_1,\ldots, T_d).$ Let $P$ be an affine hyperplane in $\mathbb R^d$ containing all these points. Then $p$ is already in the convex hull of $\mathcal W(T_1,\ldots, T_d)\cap P$. Since $\mathcal W(T_1,\ldots, T_d)\cap P$ is connected (\Cref{prop:con}) and lives in the $(d-1)$-dimensional space $P$, we obtain $\mathfrak c\left(\mathcal W(T_1,\ldots, T_d)\cap P\right)\leqslant d-1,$ again from \cite{fen, bunt}. Thus $p$ is a convex combination of at most $d-1$ points from $\mathcal W(T_1,\ldots, T_d),$ which proves the first claim.

In case $\dim(H)\geqslant 3$ we can iterate this argument once. Write $p$ as a convex combination of $d-1$ many points from $\mathcal W(T_1,\ldots T_d).$ These points live in an affine subspace $P$ of dimension $d-2,$ and since $\mathcal W(T_1,\ldots, T_d)\cap P$ is still connected by \Cref{prop:con}, \cite{fen, bunt} again implies that $p$ is a convex combination  of at most $d-2$ elements from $\mathcal W(T_1,\ldots, T_d)$.
\end{proof}

\bibliographystyle{abbrvnat}
\bibliography{references}
\end{document}